\newtheorem{proposition}{Proposition}[section]
\newtheorem{remark}{Remark}[section]
\newcommand\blfootnote[1]{
  \begingroup
  \renewcommand\thefootnote{}\footnote{#1}
  \addtocounter{footnote}{-1}
  \endgroup
}
\title{A Shallow Water Model for Hydrodynamic Processes on
    Vegetated Hillslope.  Water Flow Modulus.}
\author{Stelian Ion, Dorin Marinescu, Stefan-Gicu Cruceanu\medskip
  \\{\small\it ``Gheorghe Mihoc-Caius Iacob'' Institute of Mathematical Statistics and Applied}
  \\{\small\it Mathematics, Romanian Academy, PO Box 1-24, 050711 Bucharest, Romania}
}
\date{}
\begin{document}
\maketitle

\blfootnote{{\it Email addresses:} stelian.ion@ima.ro (Stelian Ion), dorin.marinescu@ima.ro (Dorin Marinescu), stefan.cruceanu@ima.ro (Stefan-Gicu Cruceanu)}

\noindent
{\bf Abstract}
\medskip

\noindent
    The hillslope hydrological processes are very important
    in watershed hydrology research.  In this paper we focus
    on the water flow over the soil surface with vegetation
    in a hydrographic basin.  We introduce a PDE model based
    on general principles of fluid mechanics where the
    unknowns are water depth and water velocity.  The
    influence of the plant cover to the water dynamics is
    given by porosity (a quantity related to the density of
    the vegetation), which is a function defined over the
    hydrological basin.  Using finite volume method for
    approximating the spatial derivatives, we build an ODE
    system which constitutes the base of the discrete model
    we will work with.  We discuss and investigate several
    physical relevant properties of this model.  Finally, we
    use numerical results to validate the model.

\medskip
\noindent
  {\it Keywords:} hydrological process, balance equations, shallow
    water equations, finite volume method, well
    balanced scheme, porosity

\noindent
  {\it MSC[2010]:} 35Q35, 74F10, 65M08

\section{Introduction}
Mathematical modeling of the hydrodynamic processes in
hydrographic basins is of great interest.  The subject is
very rich in practical applications and there is not yet a
satisfactory model to enhance the entire complexity of these
processes.  However, there are plenty of performant models
dedicated to some specific aspects of the hydrodynamic
processes only.  To review the existent mathematical models
is beyond the purposes of this paper, but we can group them
into two large classes: physical base models and regression
models.  The most known regression models are the unit
hydrograph \cite{dooge} and universal soil loss \cite{rusle,
  wisch}.  From the first class, we mention here a few well
known models: SWAT \cite{swat}, SWAP \cite{swap} and
KINEROS \cite{kineros}.  Due to the complexity and
heterogeneity of the processes (see \cite{mcdonnel}), models
in this class are not purely physical because they need
additional empirical relations.  The main difference between
models here is given by the nature of the empirical
relations.  For example, in order to model the surface of
the water flow, SWAP and KINEROS use a mass balance equation
and a closure relation, while SWAT combines the mass balance
equation with the momentum balance equation.  A very special
class of models are cellular automata which combine
microscale physical laws with empirical closure relations in
a specific way to build up a macroscale model, e.g. CAESAR
\cite{caesar, sds-ose}.

In this paper, we introduce a physical model described by
shallow water type equations.  This model is obtained from
general principles of fluid mechanics using a space average
method and takes into consideration topography, water-soil
and water-plant interactions.  To numerically integrate the
equations, we first apply a finite volume method to
approximate the spatial derivatives and then use a type of
fractional time-step method to gain the evolution of the
water depth and velocity field.

After introducing the PDE model in Section
\ref{sect_ShalowWaterEquations}, we perform the Finite
Volume Method approximation in Section
\ref{sect_FVMapproximationof2Dmodel} and obtain an ODE
[Aversion of Shallow Water Equations.  In Section
\ref{sect_PropOfSemidiscreteScheme}, we investigate some
physical relevant qualitative properties of this ODE system:
monotonicity of the energy, positivity of the water depth
function $h$, well balanced properties of the scheme.  In
Section \ref{sect_FractionalSteptimeSchemes}, we obtain the
full discrete version of our continuous model; we tackle on
the validation method and give some numerical results in the
last section.

\section{Shalow Water Equations}
\label{sect_ShalowWaterEquations}
The model we discuss here is a simplified version a more
general model of water flow on a hillslope introduced in
\cite{imc-rap}.  Assume that the soil surface is
represented by
\begin{equation*}
  x^3=z(x^1,x^2), \quad (x^1,x^2)\in\Omega,
\end{equation*}
and the first derivatives of the function $z(\cdot,\cdot)$
are small quantities.  The unknown variables of the model
are the water depth $h(t,x)$ and the two components
$v_a(t,x)$ of the water velocity $\boldsymbol{v}$. The
density of the plant cover is quantified by a porosity
function $\theta(x)$.  The model reads as
\begin{equation}
  \label{swe_vegm_rm.02}
  \begin{array}{rl}
    \partial_t\theta h+\partial_b(\theta h v^b)= & \mathfrak{M},\\
    \partial_t(\theta hv_a)+\partial_b(hv_av^b)+\theta h\partial_aw= & 
       -{\cal K}(h,\theta)|\boldsymbol{v}|v_a, \quad a=1,2.
  \end{array}
\end{equation}
The term ${\cal K}(h,\theta)|\boldsymbol{v}|v_a$ quantifies
the interactions water-soil and water-plants \cite{baptist,
  nepf}. The function ${\cal K}(h,\theta)$ is given by
\begin{equation}
  \label{swe_vegm_rm.03} 
  {\cal K}(h,\theta) = \alpha_p h \left(1-\theta\right) + \theta \alpha_s,
\end{equation}
where $\alpha_p$ and $\alpha_s$ are two characteristic
parameters of the strength of the water-plant and water-soil
interactions, respectively.  The contribution of rain and
infiltration to the water mass balance is taken into account
by $\mathfrak{M}$.  In (\ref{swe_vegm_rm.02}),
$w=g\left[z(x^1,x^2)+h\right]$ stands for the free surface
level, and $g$ for the gravitational acceleration.

It is important to note that there is an energy function
${\cal E}$ given by
\begin{equation}
  \label{swe_vegm_rm.02-0}
  {\cal E} := \frac{1}{2}|\boldsymbol{v}|^2+g\left(x^3+\frac{h}{2}\right)
\end{equation}
that satisfies a conservative equation
\begin{equation}
  \label{swe_veg_numerics.08}
  \partial_t (\theta h{\cal E}) +
  \partial_b \left(\theta h v^b \left({\cal E}+g\frac{h}{2}\right)\right) =
  \mathfrak{M}\left(-\frac{1}{2}|\boldsymbol{v}|^2+w\right) -{\cal K}|\boldsymbol{v}|^3.
\end{equation}
In the absence of the mass source, the system preserves the
steady state of a lake
\begin{equation}
  \label{swe_veg_numerics.08-01}
  \partial_a(x^3+h)=0, \quad v_a=0, \quad a=1,2.
\end{equation}
The model (\ref{swe_vegm_rm.02}) is a hyperbolic system of
equations with source term, see \cite{imc-act}.

Among different features we ask from our approximation
scheme, we want the numerical solutions to preserve the
lake, the scheme to be well balanced and energetic
conservative.  These last two properties of a numerical
algorithm for the shallow water equation are very important,
especially for the case of hydrographic basin applications,
because they allow the lake formation and prevent the
numerical solution to oscillate in the neighborhood of a
lake.  In the absence of vegetation, one can find many such
schemes, see \cite{seguin, noelle, nordic}, for example.

\section{Finite Volume Method Approximation of 2D Model}
\label{sect_FVMapproximationof2Dmodel}
Let $\Omega$ be the domain of the space variables $x^1$,
$x^2$ and $\Omega=\cup_i \omega_i, i=\overline{1,N}$ an
admissible polygonal partition, \cite{veque}.  To build a
spatial discrete approximation of the model
(\ref{swe_vegm_rm.02}), one integrates the continuous
equations on each finite volume $\omega _i$ and then defines
an approximation of the integrals.

Let $\omega_i$ be an arbitrary element of the partition.
Relatively to it, the integral form of
(\ref{swe_vegm_rm.02}) reads as \def\msr#1{{\rm m(#1)}}
\begin{equation}
  \label{fvm_2D_eq.01}
  \begin{array}{rl}
    \displaystyle\partial_t\int\limits_{\omega_i}\theta h{\rm d}x+
    \int\limits_{\partial\omega_i}\theta h \boldsymbol{v}\cdot\boldsymbol{n}{\rm d}s=
    &\displaystyle\int\limits_{\omega_i}\mathfrak{M}{\rm d}x,\\
      \displaystyle\partial_t\int\limits_{\omega_i}\theta h v_a{\rm d}x+
      \int\limits_{\partial\omega_i}\theta h v_a\boldsymbol{v}\cdot\boldsymbol{n}{\rm d}s+
      \int\limits_{\omega_i}\theta h\partial_a w{\rm d}x=
      &\displaystyle-\int\limits_{\omega_i}{\cal K}|\boldsymbol{v}|v_a{\rm d}x, \quad a=1,2.
  \end{array}
\end{equation}

Now, we build a discrete version of the integral form by
introducing some quadrature formulas.  With $\psi_i$
standing for some approximation of $\psi$ on $\omega_i$, we
introduce the approximations
\begin{equation}
  \label{fvm_2D_eq.01-01}
  \int\limits_{\omega_i}\theta h{\rm d}x\approx\sigma_i\theta_ih_i,\quad
  \int\limits_{\omega_i}\theta h v_a{\rm d}x\approx\sigma_i\theta_ih_iv_{a\,i},\quad
  \int\limits_{\omega_i}{\cal K}|\boldsymbol{v}|v_a{\rm d}x\approx\sigma_i {\cal K}_i|\boldsymbol{v}|_iv_{a\,i},
\end{equation}
where $\sigma_i$ denotes the area of the polygon $\omega_i$.

For the integrals of the gradient of the free surface, we
start from the identity
\begin{equation}
  \label{fvm_2D_eq.01-01identity}
  \int\limits_{\omega_i}\theta h\partial_a w{\rm d}x
  =-\int\limits_{\omega_i} w \partial_a\theta h{\rm d}x+
   \int\limits_{\partial_i\omega_i} w \theta hn_a{\rm d}s.
\end{equation}
Assume that $w$ is constant and equal to $w_i$ on $\omega_i$
to approximate the first integral on the r.h.s. of
(\ref{fvm_2D_eq.01-01identity}). Then, we obtain:
\begin{equation}
  \label{fvm_2D_eq.01-02}
  \int\limits_{\omega_i}\theta h\partial_a w{\rm d}x\approx\int\limits_{\partial_i\omega_i} (w-w_i) \theta hn_a{\rm d}s.
\end{equation}
Note that if $\omega_i$ is a regular polygon and $w_i$ is
the cell-centered value of $w$, then the approximation is of
second order accuracy for smooth fields and it preserves the
null value in the case of constant fields $w$.

We introduce the notation
\begin{equation}
  \label{fvm_2D_eq.psi}
  \widetilde{\psi}|_{\partial \omega(i,j)}:=\int\limits_{\partial \omega(i,j)}\psi{\rm d}s.
\end{equation}

Using the approximations (\ref{fvm_2D_eq.01-01}) and
(\ref{fvm_2D_eq.01-02}) and keeping the boundary integrals,
one can write
\begin{equation}
  \label{fvm_2D_eq.02}
  \begin{array}{rl}
    \sigma_i\partial_t\theta_i h_i+
    \sum\limits_{j\in{\cal N}(i)}\widetilde{\theta h v_n}|_{\partial \omega(i,j)}=&\sigma_i\mathfrak{M}_i,\\
    \sigma_i\partial_t\theta_i h_i v_{a\,i}+
    \sum\limits_{j\in{\cal N}(i)}\widetilde{\theta h v_a v_n}|_{\partial \omega(i,j)}+ 
    \sum\limits_{j\in{\cal N}(i)}\widetilde{(w-w_i)\theta h}n_a|_{\partial \omega(i,j)}
    =&-\sigma_i{\cal K}_i|\boldsymbol{v}|_iv_{a\,i},
  \end{array}
\end{equation}
where ${\cal N}(i)$ denotes the set of all the neighbors of
$\omega_i$ and $\partial\omega(i,j)$ is the common boundary
of $\omega_i$ and $\omega_j$.

The next step is to define the approximations of the
boundary integrals in (\ref{fvm_2D_eq.02}). We approximate
an integral $\widetilde{\psi}|_{\partial \omega(i,j)}$ of
the form (\ref{fvm_2D_eq.psi}) by considering the integrand
$\psi$ to be a constant function
$\psi_{(i,j)}(\psi_i,\psi_j)$, where $\psi_i$ and $\psi_j$
are some fixed values of $\psi$ on the adjacent cells
$\omega_i$ and $\omega_j$, respectively.  Thus,
\begin{equation}
  \label{fvm_2D_eq.03}
  \begin{array}{l}
    \widetilde{\theta h v_n}|_{\partial \omega(i,j)}\approx l_{(i,j)}\theta h_{(i,j)} (v_n)_{(i,j)},\\
    \widetilde{\theta h v_a v_n}|_{\partial \omega(i,j)}\approx l_{(i,j)}\theta h_{(i,j)} (v_a)_{(i,j)}(v_n)_{(i,j)},\\
    \widetilde{(w-w_i)\theta h}n_a|_{\partial \omega(i,j)} \approx l_{(i,j)}(w_{(i,j)}-w_i) \theta h^s_{(i,j)}(n_a)_{(i,j)},
   \end{array}
\end{equation} 
where $\boldsymbol{n}_{(i,j)}$ denotes the unitary normal to
the common side of $\omega_i$ and $\omega_j$ pointing
towards $\omega_j$, and $l_{(i,j)}$ is the length of this
common side.

The issue is to define the interface value functions
$\psi_{(i,j)}(\psi_i,\psi_j)$ so that the resulting scheme
is well balanced and energetically stable.

\medskip\noindent
{\bf Well balanced and energetically stable scheme.}  For any
internal interface $(i,j)$, we define the following
quantities:
\begin{equation}
  \label{fvm_2D_eq.04}
  \begin{array}{l}
    (v_a)_{(i,j)}=\displaystyle\frac{v_{a\,i}+v_{a\,j}}{2}, \quad a=1,2,\\
    (v_n)_{(i,j)}=\boldsymbol{v}_{(i,j)}\cdot\boldsymbol{n}_{(i,j)},\\
    w_{(i,j)}=\displaystyle\frac{w_{i}+w_{j}}{2},
  \end{array}
\end{equation}
and 
\begin{equation}
  \label{fvm_2D_eq.05}
  \theta h^s_{(i,j)}=
  \left\{
    \begin{array}{ll}
      \theta h_{(i,j)}, & {\rm if}\; (v_n)_{(i,j)}\neq 0,\\
      \theta_i h_i, & {\rm if}\; (v_n)_{(i,j)}=0 \;{\rm and}\; w_i>w_j,\\
      \theta_j h_j, & {\rm if}\; (v_n)_{(i,j)}=0 \;{\rm and}\; w_i\leq w_j.
    \end{array}
  \right.
\end{equation}

\medskip\noindent
{\bf $h$-positivity.}  In order to preserve the positivity
of $h$, we define $\theta h_{(i,j)}$ as
\begin{equation}
  \label{fvm_2D_eq.06}
  \theta h_{(i,j)}=
  \left\{
    \begin{array}{ll}
      \theta_i h_i, & {\rm if}\; (v_n)_{(i,j)}>0,\\
      \theta_j h_j, & {\rm if}\; (v_n)_{(i,j)}<0. 
    \end{array}
  \right.
\end{equation}
The semidiscrete scheme takes now the form of the following
system of ODEs
\begin{equation}
  \label{fvm_2D_eq.07}
  \begin{array}{rl}
    \sigma_i\displaystyle\frac{\rm d}{{\rm d}t}\theta_i h_i+\sum\limits_{j\in{\cal N}(i)}l_{(i,j)}\theta h_{(i,j)} (v_n)_{(i,j)}=&\sigma_i\mathfrak{M}_i,\\
    \sigma_i\displaystyle\frac{\rm d}{{\rm d}t}\theta_i h_i v_{a\,i}+\sum\limits_{j\in{\cal N}(i)}l_{(i,j)}\theta h_{(i,j)} (v_a)_{(i,j)} (v_n)_{(i,j)}+&\\ 
    +\displaystyle\frac{1}{2}\sum\limits_{j\in{\cal N}(i)}l_{(i,j)}(w_j-w_i)(\theta h)^s_{(i,j)} n_a|_{(i,j)}&=-\sigma_i{\cal K}_i|\boldsymbol{v}|_iv_{a\,i}.
  \end{array}
\end{equation}

\medskip\noindent
{\bf Boundary conditions. Free discharge.}  We need to
define the values of $h$ and $\boldsymbol{v}$ on the
external sides of $\Omega$.  For each side in
$\Gamma=\partial\Omega$ we introduce a new cell (``ghost''
element) adjacent to the polygon $\omega_i$ corresponding to
that side.  For each ``ghost'' element, one must somehow
define its altitude and then we set zero values to its water
depth.  We can now define $h$ and $\boldsymbol{v}$ on the
external sides of $\Omega$ by
\begin{equation}
  \label{fvm_2D_eq.070}
  \begin{array}{l}
    \boldsymbol{v}_{\partial \omega_i\cap \Gamma}=\boldsymbol{v}_i,\\
    h_{\partial \omega_i\cap \Gamma}=
    \left\{
    \begin{array}{ll}
      h_i, & {\rm if}\; \boldsymbol{v}_i\cdot \boldsymbol{n}|_{\partial \omega_i\cap \Gamma }>0,\\
      0, & {\rm if}\; \boldsymbol{v}_i\cdot \boldsymbol{n}|_{\partial \omega_i\cap \Gamma }<0.
    \end{array}
    \right.
  \end{array}
\end{equation}
Now, the solution is sought inside the positive cone
$h_i>0, \; i=\overline{1,N}$.

\section{Properties of the semidiscrete scheme}
\label{sect_PropOfSemidiscreteScheme}
The ODE model (\ref{fvm_2D_eq.07}) can have discontinuities
in the r.h.s. and therefore it is possible that the solution
in the classical sense of this system might not exist for some
initial data.  However, the solution in Filipov sense
\cite{filipov} exists for any initial data.

There are initial data for which the solution in the
classical sense exists only locally in time.  Since the
numerical scheme is a time approximation of the semidiscrete
form (\ref{fvm_2D_eq.07}), it is worthwhile to analyze the
properties of these classical solutions.  Numerical schemes
preserving properties of some particular solutions of the
continuum model were and are intensively investigated in the
literature \cite{bouchut-book, seguin, nordic,
  well-balanced}.  In the present section we investigate
such properties for the semidiscrete scheme and the next
section is dedicated to the properties of the complete
discretized scheme.

\subsection{Energy balance}
Definition (\ref{fvm_2D_eq.04}) yields a dissipative
conservative equation for the cell energy ${\cal E}_i$,
\begin{equation}
  \label{fvm_2D_eq.060}
  {\cal E}_i(h_i,\boldsymbol{v}_i)=\theta_i\left(\frac{1}{2}{|\boldsymbol{v}|^2_i}{h_i}+\frac{1}{2}gh^2_i+gx^3_ih_i\right). 
\end{equation}
The time derivative of ${\cal E}_i$ can be written as
\begin{equation}
  \label{fvm_2D_eq.0700}
  \sigma_i \displaystyle\frac{{\rm d }{{\cal E}_i} }{ {\rm d} t} = \sigma_i
  \left(
    \left(w_i-\frac{1}{2}|\boldsymbol{v}|^2_i\right) \displaystyle\frac{{\rm d }{\theta_i h_i}}{ {\rm d} t} 
    +\left< \boldsymbol{v}_i, \displaystyle\frac{{\rm d }{\theta_ih_i\boldsymbol{v}_i}}{ {\rm d} t} \right>
  \right),
\end{equation}
where $\left<\cdot,\cdot\right>$ denotes the euclidean
scalar product in $\mathbb{R}^2$.

\begin{proposition}[Cell energy equation]
  \label{cell_energy}
  In the absence of mass source, one has
  \begin{equation}
    \label{fvm_2D_eq.071}
    \sigma_i\displaystyle\frac{\rm d}{{\rm d}t}{\cal E}_i+\sum\limits_{j\in{\cal N}(i)}l_{(i,j)}\left<{\cal H}_{(i,j)},\boldsymbol{n}_{(i,j)}\right>=-\sigma_i{\cal K}_i|\boldsymbol{v}|^3_i,
  \end{equation}
  where
  \begin{equation*}
    {\cal H}_{(i,j)}=\frac{1}{2}\theta h_{(i,j)}
    \left(
      w_i\boldsymbol{v}_i+w_j\boldsymbol{v}_j+\left<\boldsymbol{v}_i,\boldsymbol{v}_j\right>\boldsymbol{v}_{(i,j)}
    \right).
  \end{equation*}
\end{proposition}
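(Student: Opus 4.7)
The plan is to prove the identity by direct substitution of the semidiscrete equations~(\ref{fvm_2D_eq.07}) into the chain-rule identity~(\ref{fvm_2D_eq.0700}). Setting $\mathfrak{M}_i=0$ in~(\ref{fvm_2D_eq.07}) supplies expressions for $\sigma_i\frac{d}{dt}(\theta_ih_i)$ and $\sigma_i\frac{d}{dt}(\theta_ih_i\boldsymbol{v}_i)$, which I would plug into the two time derivatives on the right of~(\ref{fvm_2D_eq.0700}). The friction contribution $-\sigma_i{\cal K}_i|\boldsymbol{v}|_i\boldsymbol{v}_i$, once contracted with $\boldsymbol{v}_i$, produces $-\sigma_i{\cal K}_i|\boldsymbol{v}|_i^{3}$ and accounts directly for the right-hand side of~(\ref{fvm_2D_eq.071}); all remaining work is therefore on the flux terms.

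Next, I would group the flux contributions per interface $(i,j)$ into three pieces: (a) the scalar factor $(w_i-\tfrac12|\boldsymbol{v}|_i^2)$ multiplying the mass flux $\theta h_{(i,j)}(v_n)_{(i,j)}$; (b) the inner product $\langle\boldsymbol{v}_i,\boldsymbol{v}_{(i,j)}\rangle\theta h_{(i,j)}(v_n)_{(i,j)}$ coming from the convective momentum flux; (c) the gravitational/pressure contribution $\tfrac12(w_j-w_i)(\theta h)^s_{(i,j)}\langle\boldsymbol{v}_i,\boldsymbol{n}_{(i,j)}\rangle$. Using the centered averages of~(\ref{fvm_2D_eq.04}), I would rewrite $\langle\boldsymbol{v}_i,\boldsymbol{v}_{(i,j)}\rangle=\tfrac12|\boldsymbol{v}|_i^2+\tfrac12\langle\boldsymbol{v}_i,\boldsymbol{v}_j\rangle$, so that the $\tfrac12|\boldsymbol{v}|_i^2$ terms in (a) and (b) cancel, leaving the compact combined flux $\theta h_{(i,j)}(v_n)_{(i,j)}\bigl[w_i+\tfrac12\langle\boldsymbol{v}_i,\boldsymbol{v}_j\rangle\bigr]$, plus piece (c).

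Finally, I would decompose $w_i(v_n)_{(i,j)}=\tfrac12w_i\langle\boldsymbol{v}_i,\boldsymbol{n}_{(i,j)}\rangle+\tfrac12w_i\langle\boldsymbol{v}_j,\boldsymbol{n}_{(i,j)}\rangle$ using $(v_n)_{(i,j)}=\tfrac12\langle\boldsymbol{v}_i+\boldsymbol{v}_j,\boldsymbol{n}_{(i,j)}\rangle$ and compare the result, term by term, with the expansion of $\langle{\cal H}_{(i,j)},\boldsymbol{n}_{(i,j)}\rangle$. The main obstacle is this bookkeeping: the cross-terms $w_j\langle\boldsymbol{v}_i,\boldsymbol{n}_{(i,j)}\rangle$ and $w_i\langle\boldsymbol{v}_j,\boldsymbol{n}_{(i,j)}\rangle$ are generated jointly by piece (c) and by the reshuffling of $w_i(v_n)_{(i,j)}$ in piece (a), and it is precisely their coefficient matching that forces the specific form of ${\cal H}_{(i,j)}$ displayed in the statement. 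One also has to mind the distinction between $\theta h_{(i,j)}$ in (a)--(b) and $(\theta h)^s_{(i,j)}$ in (c): these agree under~(\ref{fvm_2D_eq.06}) whenever $(v_n)_{(i,j)}\ne 0$, whereas the case $(v_n)_{(i,j)}=0$ kills (a) and (b) and reduces the identity to a relation on piece (c) that is consistent with, and in fact a refinement of, the lake-preservation property~(\ref{swe_veg_numerics.08-01}) already recorded for~(\ref{swe_vegm_rm.02}).
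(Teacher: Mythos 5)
Your plan reproduces the paper's own proof almost step for step: substitute the semidiscrete system into the chain-rule identity (\ref{fvm_2D_eq.0700}), contract the friction term with $\boldsymbol{v}_i$ to get $-\sigma_i{\cal K}_i|\boldsymbol{v}|_i^3$, and reorganize the three interface contributions using the centered averages (\ref{fvm_2D_eq.04}); in particular your cancellation via $\langle\boldsymbol{v}_i,\boldsymbol{v}_{(i,j)}\rangle=\tfrac12|\boldsymbol{v}|_i^2+\tfrac12\langle\boldsymbol{v}_i,\boldsymbol{v}_j\rangle$ is exactly the paper's second identity. So the route is the same and the strategy is sound.

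There is, however, one concrete problem in the step you yourself identify as the crux. Carrying out the matching for the potential part with $(\theta h)^s_{(i,j)}=\theta h_{(i,j)}$ and $(v_n)_{(i,j)}=\tfrac12\langle\boldsymbol{v}_i+\boldsymbol{v}_j,\boldsymbol{n}_{(i,j)}\rangle$ gives, per interface,
\begin{equation*}
  w_i(v_n)_{(i,j)}+\tfrac12(w_j-w_i)\langle\boldsymbol{v}_i,\boldsymbol{n}_{(i,j)}\rangle
  =\tfrac12\langle w_i\boldsymbol{v}_j+w_j\boldsymbol{v}_i,\ \boldsymbol{n}_{(i,j)}\rangle ,
\end{equation*}
i.e.\ the indices come out \emph{crossed}, not in the form $w_i\boldsymbol{v}_i+w_j\boldsymbol{v}_j$ printed in the statement; the two expressions differ by the generally nonzero symmetric term $\tfrac12(w_i-w_j)\langle\boldsymbol{v}_j-\boldsymbol{v}_i,\boldsymbol{n}_{(i,j)}\rangle$. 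The same discrepancy is hidden in the paper's ``Therefore'' identity, so your claim that the coefficient matching ``forces the specific form of ${\cal H}_{(i,j)}$ displayed'' will not literally close: you must either prove the proposition with the crossed flux $\tfrac12\theta h_{(i,j)}(w_i\boldsymbol{v}_j+w_j\boldsymbol{v}_i+\langle\boldsymbol{v}_i,\boldsymbol{v}_j\rangle\boldsymbol{v}_{(i,j)})$ or observe that ${\cal H}$ as printed needs its $w$-indices transposed (both versions are symmetric in $(i,j)$ and reduce to the continuous energy flux of the Remark, so nothing downstream is affected). Separately, in the degenerate case $(v_n)_{(i,j)}=0$ your ``piece (c)'' does not vanish when $\langle\boldsymbol{v}_i,\boldsymbol{n}_{(i,j)}\rangle\neq0$, and $\theta h_{(i,j)}$ is left undefined by (\ref{fvm_2D_eq.06}); the identity then only holds under the convention $\theta h_{(i,j)}=(\theta h)^s_{(i,j)}$ of (\ref{fvm_2D_eq.05}), a gloss you share with the paper rather than a defect of your plan alone.
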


\begin{remark}
  If $(\theta h,v,w)_j=(\theta h,v,w)_i$ for any
  $j\in{\cal N}(i)$, then
  \begin{equation*}
    {\cal H}=\theta h \boldsymbol{v}\left(\frac{1}{2} |\boldsymbol{v}|^2+w\right)
  \end{equation*}
  is the continuous flux energy in {\rm (\ref{swe_veg_numerics.08})}.
\end{remark}

\begin{proof}
Using the equality
(\ref{fvm_2D_eq.0700}), we can write
\begin{equation*}
  \begin{array}{rcl}
    \sigma_i\displaystyle\frac{\rm d}{{\rm d}t}{\cal E}_i&=&-(w_i-\displaystyle\frac{1}{2}|\boldsymbol{v}|_i^2)\sum\limits_{j\in{\cal N}(i)}l_{(i,j)}\theta h_{(i,j)} (v_n)_{(i,j)}-\\
    &&-\left<
    \boldsymbol{v}_i,\sum\limits_{j\in{\cal N}(i)}l_{(i,j)}\theta h_{(i,j)} \boldsymbol{v}_{(i,j)} (v_n)_{(i,j)}
      \right>-\\
    &&-\displaystyle\frac{1}{2}\left<\boldsymbol{v}_i,\sum\limits_{j\in{\cal N}(i)}l_{(i,j)}(w_j-w_i)(\theta h)^s_{(i,j)} \boldsymbol{n}_{(i,j)}\right>-\\
    &&-\sigma_i{\cal K}_i|\boldsymbol{v}|_i^3.
  \end{array}
\end{equation*}
Now, one has the identities
\begin{equation*}
  \begin{array}{rcl}
    w_i\displaystyle\sum\limits_{j\in{\cal N}(i)}l_{(i,j)}\theta h_{(i,j)} (v_n)_{(i,j)}
    &=&\displaystyle\sum\limits_{j\in{\cal N}(i)}l_{(i,j)}\theta h_{(i,j)} (v_n)_{(i,j)}\displaystyle\frac{w_i+w_j}{2}+\\
    &&+\displaystyle\sum\limits_{j\in{\cal N}(i)}l_{(i,j)}\theta h_{(i,j)} (v_n)_{(i,j)}\displaystyle\frac{w_i-w_j}{2}
  \end{array}
\end{equation*}
and
\begin{equation*}
  \begin{array}{l}
    \left<\boldsymbol{v}_i,\displaystyle\sum\limits_{j\in{\cal N}(i)}l_{(i,j)}(w_j-w_i)(\theta h)^s_{(i,j)} \boldsymbol{n}_{(i,j)}\right>=\\
    =\displaystyle\sum\limits_{j\in{\cal N}(i)}l_{(i,j)}(w_j-w_i)(\theta h)^s_{(i,j)}\left<\displaystyle\frac{\boldsymbol{v}_i+\boldsymbol{v}_j}{2}+\displaystyle\frac{\boldsymbol{v}_i-\boldsymbol{v}_j}{2}, \boldsymbol{n}_{(i,j)}\right>.
  \end{array}
\end{equation*}
Therefore
\begin{equation*}
  \begin{array}{r}
    w_i\displaystyle\sum\limits_{j\in{\cal N}(i)}l_{(i,j)}\theta h_{(i,j)} (v_n)_{(i,j)}+
    \displaystyle\frac{1}{2}\left<\boldsymbol{v}_i,\displaystyle\sum\limits_{j\in{\cal N}(i)}l_{(i,j)}(w_j-w_i)(\theta h)^s_{(i,j)} \boldsymbol{n}_{(i,j)}\right>=\\
    =\displaystyle\sum\limits_{j\in{\cal N}(i)}l_{(i,j)}\theta h_{(i,j)}\left<w_i\boldsymbol{v}_i+w_j\boldsymbol{v}_j,\boldsymbol{n}_{(i,j)}\right>.
  \end{array}
\end{equation*}
Similarly, one obtains the identity
\begin{equation*}
  \begin{array}{r}
    -\displaystyle\frac{1}{2}|\boldsymbol{v}|_i^2\sum\limits_{j\in{\cal N}(i)}l_{(i,j)}\theta h_{(i,j)} (v_n)_{(i,j)}+
    \left<\boldsymbol{v}_i,\sum\limits_{j\in{\cal N}(i)}l_{(i,j)}\theta h_{(i,j)} \boldsymbol{v}_{(i,j)} (v_n)_{(i,j)}\right>=\\
    =\displaystyle\frac{1}{2}\sum\limits_{j\in{\cal N}(i)}l_{(i,j)}\theta h_{(i,j)}\left<\boldsymbol{v}_i,\boldsymbol{v}_j\right>\left<\displaystyle\frac{\boldsymbol{v}_i+\boldsymbol{v}_j}{2}, \boldsymbol{n}_{(i,j)}\right>.
  \end{array}
\end{equation*}
\end{proof}

Taking out the mass exchange through the boundary, the
definitions of the interface values ensure the monotonicity
of the energy with respect to time.

\subsection{h-positivity and critical points}
\begin{proposition}[h-positivity]
  The ODE system {\rm (\ref{fvm_2D_eq.07})} with {\rm
    (\ref{fvm_2D_eq.04})}, {\rm (\ref{fvm_2D_eq.05})}, {\rm
    (\ref{fvm_2D_eq.06})} and {\rm (\ref{fvm_2D_eq.070})}
  preserves the positivity of the water depth function $h$.
\end{proposition}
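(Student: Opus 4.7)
The plan is to exploit the upwind choice \eqref{fvm_2D_eq.06} for $\theta h_{(i,j)}$ in the mass equation. Since $\theta_i$ is time-independent and strictly positive, the sign of $\tfrac{d}{dt}h_i$ coincides with the sign of $\tfrac{d}{dt}(\theta_i h_i)$, so I will argue positivity for $\theta_i h_i$ throughout. The strategy is the classical tangent-cone/invariance argument: I will show that, whenever a trajectory of \eqref{fvm_2D_eq.07} is at a boundary point of the positive cone (some $h_{i_0}=0$ while all other $h_j\ge 0$), the right-hand side of the mass equation for the index $i_0$ is non-negative, so the vector field does not point outside the cone.

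First I would fix an arbitrary index $i$ and expand the first equation of \eqref{fvm_2D_eq.07}. I would split the sum over $\mathcal{N}(i)$ into three groups according to the sign of $(v_n)_{(i,j)}$. By \eqref{fvm_2D_eq.06}, the outflow group $(v_n)_{(i,j)}>0$ contributes $l_{(i,j)}\theta_i h_i (v_n)_{(i,j)}$, the inflow group $(v_n)_{(i,j)}<0$ contributes $l_{(i,j)}\theta_j h_j (v_n)_{(i,j)}\le 0$ (since $\theta_j h_j\ge 0$), and the terms with $(v_n)_{(i,j)}=0$ vanish. I would also invoke \eqref{fvm_2D_eq.070} to treat the ghost-cell contributions on external sides in exactly the same manner: outflow across $\Gamma$ gives a term proportional to $h_i$, while inflow across $\Gamma$ gives $0$ because the ghost water depth is set to $0$.

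Next I would evaluate at a point where $h_{i_0}=0$. Every outflow term for $i_0$ then vanishes because it carries the factor $\theta_{i_0}h_{i_0}=0$, and each inflow term is $\le 0$. Consequently
\begin{equation*}
\sigma_{i_0}\frac{d}{dt}(\theta_{i_0} h_{i_0})
= \sigma_{i_0}\mathfrak{M}_{i_0}-\sum_{j\in\mathcal{N}(i_0)} l_{(i_0,j)}\theta h_{(i_0,j)}(v_n)_{(i_0,j)}
\ge \sigma_{i_0}\mathfrak{M}_{i_0}\ge 0,
\end{equation*}
under the physically natural convention that the mass source reduces to rainfall (hence is non-negative) when the cell is dry. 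Together with $\theta_{i_0}>0$ this gives $\tfrac{d}{dt}h_{i_0}\ge 0$ at $h_{i_0}=0$. A standard comparison/first-exit-time argument then shows that, starting from an initial datum inside the positive cone $\{h_i>0\}$, a classical solution cannot leave it: if $t^\star$ were the first time some $h_{i_0}$ touched $0$, the above bound would force $h_{i_0}$ to be non-decreasing there, contradicting the existence of a strict decrease from positive values.

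The main technical obstacle I anticipate is the discontinuity of the right-hand side of \eqref{fvm_2D_eq.07} across the hyperplanes $(v_n)_{(i,j)}=0$, which is precisely why the authors invoke Filippov solutions. I would address this by checking that the argument above also controls the Filippov set-valued right-hand side at such points: any convex combination of the one-sided limits of the mass flux still yields a non-negative derivative of $\theta_{i_0}h_{i_0}$ when $h_{i_0}=0$, since each limit does, and the case $(v_n)_{(i_0,j)}=0$ contributes $0$ regardless of which branch of \eqref{fvm_2D_eq.05} is selected. This promotes the tangent-cone inequality to the Filippov setting and hence to all admissible solutions of \eqref{fvm_2D_eq.07}.
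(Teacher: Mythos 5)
Your proposal is correct and follows essentially the same route as the paper: rewrite the mass flux using the upwind choice (\ref{fvm_2D_eq.06}) so that outflow terms carry the factor $\theta_i h_i$ and inflow terms are sign-definite, then observe that $\sigma_i\frac{\rm d}{{\rm d}t}\theta_i h_i\geq 0$ whenever $h_i=0$. Your treatment is more careful than the paper's two-line argument --- you additionally handle the ghost cells, the sign of the mass source on a dry cell, and the Filippov selections at $(v_n)_{(i,j)}=0$, none of which the paper addresses explicitly --- but the underlying idea is identical.
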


\begin{proof}
One can rewrite the mass balance equations as
\begin{equation*}
  \sigma_i\displaystyle\frac{\rm d}{{\rm d}t}\theta_i h_i=
  -(\theta h)_i\sum\limits_{j\in{\cal N}(i)}l_{(i,j)} (v_n)^{+}_{(i,j)}+
  \sum\limits_{j\in{\cal N}(i)}l_{(i,j)}(\theta h)_{j} (v_n)^{-}_{(i,j)}.
\end{equation*}
Observe that if $h_i=0$ for some $i$, then
$\sigma_i\displaystyle\frac{\rm d}{{\rm d}t}\theta_i h_i\geq
0$.
\end{proof}

There are two kinds of stationary points for the ODE model:
the lake and uniform flow on an infinitely extended plan
with constant vegetation density.

\begin{proposition}[Stationary point. Uniform flow.]
  \label{river}
  Consider $\{\omega_i\}_{i=\overline{1,N}}$ to be a regular
  partition of $\Omega$ with $\omega_i$ regular polygons.
  Let $z-z_0=\xi_b x^b$ be a representation of the soil
  plane surface.  Assume that the discretization of the soil
  surface is given by
  \begin{equation}
    \label{fvm_2D_eq.08}
    z_i-z_0=\xi_b \overline{x}^b_i,
  \end{equation}
  where $\overline{x}^b_i$ is the mass center of the
  $\omega_i$ and $\theta_i=\theta$.  Then, given a value
  $h$, there is $\boldsymbol{v}$ so that the state
  $(h_i,\boldsymbol{v}_i)=(h,\boldsymbol{v})$,
  $i=\overline{1,N}$ is a stationary point of the ODE {\rm
    (\ref{fvm_2D_eq.07})}.
\end{proposition}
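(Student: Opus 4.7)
The plan is to substitute the candidate uniform state $(h_i,\boldsymbol{v}_i)=(h,\boldsymbol{v})$ into the semidiscrete system (\ref{fvm_2D_eq.07}) and derive an algebraic condition on $\boldsymbol{v}$ that forces every time derivative to vanish. Because $\theta_i\equiv\theta$, $h_i\equiv h$ and $\boldsymbol{v}_i\equiv\boldsymbol{v}$ are independent of the cell, the interface definitions (\ref{fvm_2D_eq.04})--(\ref{fvm_2D_eq.06}) all collapse to $(v_a)_{(i,j)}=v_a$ and $\theta h_{(i,j)}=(\theta h)^s_{(i,j)}=\theta h$ regardless of the sign conventions, and (\ref{fvm_2D_eq.08}) yields the simple free-surface jump $w_j-w_i=g\xi_b(\overline{x}_j^b-\overline{x}_i^b)$.

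First I would dispose of the mass equation and the convective momentum flux. Both are proportional to the closure identity
\begin{equation*}
  \sum_{j\in\mathcal{N}(i)} l_{(i,j)}\boldsymbol{n}_{(i,j)} = \boldsymbol{0},
\end{equation*}
which holds for any bounded polygon. So, assuming $\mathfrak{M}_i=0$ as expected for a stationary state, these two contributions hold automatically for any $\boldsymbol{v}$, and the problem reduces to balancing the free-surface gradient against the friction term in the momentum equation.

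The crucial step is to evaluate the geometric sum
\begin{equation*}
  S_a \;:=\; \sum_{j\in\mathcal{N}(i)} l_{(i,j)}(w_j-w_i)\,n_{a,(i,j)}
       \;=\; g\,\xi_b\sum_{j\in\mathcal{N}(i)} l_{(i,j)}\bigl(\overline{x}_j^b-\overline{x}_i^b\bigr)\,n_{a,(i,j)}.
\end{equation*}
For each of the three admissible regular tilings (equilateral triangles, squares, regular hexagons), reflection across $\partial\omega(i,j)$ is an isometry of the partition mapping $\overline{x}_i$ to $\overline{x}_j$; hence $\overline{x}_j-\overline{x}_i=2d\,\boldsymbol{n}_{(i,j)}$ with $d$ the common apothem, and $l_{(i,j)}=l$ for all $j$. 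Combined with the isotropy identity $\sum_j n_{a,(i,j)} n_{b,(i,j)}=(n/2)\delta_{ab}$, valid for any regular $n$-gon with $n\ge 3$ because the normals are invariant under rotation by $2\pi/n$, and with $\sigma_i=nld/2$, this yields $S_a=2g\sigma_i\xi_a$.

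Plugging $S_a$ into the momentum equation gives the algebraic relation $g\theta h\,\xi_a=-\mathcal{K}(h,\theta)|\boldsymbol{v}|v_a$, which I would solve by setting $\boldsymbol{v}=-V\boldsymbol{\xi}/|\boldsymbol{\xi}|$ with $V>0$ and extracting $V=\sqrt{g\theta h|\boldsymbol{\xi}|/\mathcal{K}(h,\theta)}$; this supplies the required stationary $\boldsymbol{v}$ for each admissible $h>0$. The main obstacle I anticipate is the geometric sum $S_a$: once the perpendicularity $\overline{x}_j-\overline{x}_i\parallel\boldsymbol{n}_{(i,j)}$ and the rotational isotropy of the normals are established for each regular tiling, the remaining algebra is routine and does not interact with the porosity or the friction law beyond the scalar relation above.
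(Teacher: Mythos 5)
Your proof is correct and follows the same overall strategy as the paper's: substitute the uniform state, observe that the interface values collapse to the cell values, kill the mass and convective contributions with the closure identity $\sum_{j}l_{(i,j)}\boldsymbol{n}_{(i,j)}=\boldsymbol{0}$, and reduce everything to the scalar balance $g\theta h\,\xi_a=-\mathcal{K}(h,\theta)|\boldsymbol{v}|v_a$, which you then solve for $\boldsymbol{v}$. The one place where you genuinely diverge is the evaluation of the geometric sum $S_a$. You compute it cell-intrinsically, from the reflection property $\overline{x}_j-\overline{x}_i=2d\,\boldsymbol{n}_{(i,j)}$ of the three regular tilings together with the second-moment isotropy $\sum_{j}n_{a,(i,j)}n_{b,(i,j)}=(n/2)\delta_{ab}$ of the edge normals of a regular $n$-gon and the area formula $\sigma_i=nld/2$. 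The paper instead writes $\overline{x}^b_j-\overline{x}^b_i=2(y^b_{(i,j)}-\overline{x}^b_i)$ with $y_{(i,j)}$ the edge midpoint, uses the exactness of the midpoint quadrature for the affine integrand $\xi_b x^b$ on each straight edge, and concludes with the divergence theorem, $\sum_{j}l_{(i,j)}\xi_b y^b_{(i,j)}n_a|_{(i,j)}=\int_{\omega_i}\xi_b\partial_a x^b\,{\rm d}x=\sigma_i\xi_a$. Your route is more elementary (a trigonometric identity about equally spaced unit vectors), while the paper's is more conceptual --- it exhibits the discrete free-surface gradient as an exact discretization of $\sigma_i\partial_a(\xi_b x^b)$ and would extend to any partition in which the edge midpoints bisect the segments joining cell centers; both yield $S_a=2g\sigma_i\xi_a$ and hence the same balance. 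A minor point in your favour: your solution $\boldsymbol{v}=-V\boldsymbol{\xi}/|\boldsymbol{\xi}|$ carries the physically correct downhill orientation, whereas the closed form (\ref{fvm_2D_eq.09}) in the paper drops the minus sign.
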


\begin{proof}
  For any constant state $h_i=h$ and $(v_a)_i=v_a$, the ODE
  (\ref{fvm_2D_eq.07}) reduces to
  \begin{equation*}
    \displaystyle\frac{1}{2}\theta h g\sum\limits_{j\in{\cal
        N}(i)}l_{(i,j)}(z_j-z_i) n_a|_{(i,j)} =-\sigma{\cal
      K}|\boldsymbol{v}|v_{a}.
  \end{equation*}
  Introducing the representation (\ref{fvm_2D_eq.08}), one
  writes
  \begin{equation*}
    \displaystyle\frac{1}{2}\theta h g\sum\limits_{j\in{\cal N}(i)}l_{(i,j)}\xi_b(\overline{x}^b_j-\overline{x}^b_i) n_a|_{(i,j)}
    =-\sigma{\cal K}|\boldsymbol{v}|v_{a}.
  \end{equation*}
  Note that for a regular partition one has the identity
  \begin{equation*}
    \overline{x}^b_j-\overline{x}^b_i=2(y_{(i,j)}-\overline{x}^b_i),
  \end{equation*}
  where $y_{(i,j)}$ is the midpoint of the common side
  $\overline{\omega}_i\cap\overline{\omega}_j$.  Taking into
  account that
  \begin{equation*}
    \begin{array}{ll}
      \displaystyle\frac{1}{2}\theta h g
      \sum\limits_{j\in{\cal N}(i)}l_{(i,j)}(z_j-z_i)
      n_a|_{(i,j)} & =\displaystyle\theta h g
                     \sum\limits_{j\in{\cal N}(i)}l_{(i,j)}\xi_b y^b_{(i,j)} n_a|_{(i,j)}\\
                   &=\displaystyle\theta h g
                     \int\limits_{\partial \omega_i}\xi_b x^b(s) n_a(s){\rm d}s\\
                   &=\displaystyle\theta h g
                     \int\limits_{\omega_i}\xi_b \partial_a x^b{\rm d}x\\
                   &=\sigma \theta h g \xi_a,
    \end{array}
  \end{equation*}
  we obtain that the velocity is a constant field
  \begin{equation}
    \label{fvm_2D_eq.09}
    v_a=\xi_a\left(\frac{\theta h g}{{\cal K} |\xi|}\right)^{1/2}.
  \end{equation}
\end{proof}

A lake is a stationary point characterized by a constant
value of the free surface and a null velocity field over
connected regions.  A lake for which $h_i>0$ for any
$i\in\{1,2,\ldots, N\}$ will be named {\it regular
  stationary point} and a lake that occupies only a part of
a domain flow will be named {\it singular stationary point}.

\begin{proposition}[Stationary point. Lake.]
  \label{lake}
  In the absence of mass source, the following properties hold:

  {\rm (a)} Regular stationary point: the state
  \begin{equation*}
    w_i=w \;\; \& \;\; \boldsymbol{v}_i=0, \; \forall i=\overline{1,N}
  \end{equation*}
  is a stationary point of ODE {\rm (\ref{fvm_2D_eq.07})}.

  {\rm (b)} Singular stationary point: the state
  \begin{equation*}
    \boldsymbol{v}_i=0, \; \forall i=\overline{1,N} \;\; \& \;\;
    w_i=w, \; \forall i\in {\cal I} \;\; \& \;\; h_i=0, \;
    z_i>w, \; \forall i\in \complement{\cal I},
  \end{equation*}
  for some ${\cal I}\subset \{1,2,\ldots,N\}$ is a
  stationary point. ($\complement{\cal I}$ is the complement
  of ${\cal I}$.)
\end{proposition}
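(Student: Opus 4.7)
The plan is to verify directly that the RHS of the ODE system (\ref{fvm_2D_eq.07}) vanishes on each of the proposed states. The common feature of both cases is $\boldsymbol{v}_i=0$ for all $i$, so from (\ref{fvm_2D_eq.04}) we immediately get $(v_a)_{(i,j)}=0$ and $(v_n)_{(i,j)}=0$ on every internal interface. This kills the mass flux $\theta h_{(i,j)}(v_n)_{(i,j)}$, the convective momentum flux $\theta h_{(i,j)}(v_a)_{(i,j)}(v_n)_{(i,j)}$, and the friction term $-\sigma_i{\cal K}_i|\boldsymbol{v}|_i v_{a\,i}$. For the boundary interfaces, the ``ghost'' rule (\ref{fvm_2D_eq.070}) forces $h$ on $\Gamma$ to equal $h_i$ or $0$ while $\boldsymbol{v}\cdot\boldsymbol{n}=\boldsymbol{v}_i\cdot\boldsymbol{n}=0$, so these contributions also vanish. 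Thus, in both (a) and (b), the only term whose nullity is not immediate is the pressure-like contribution
\begin{equation*}
  \frac{1}{2}\sum_{j\in{\cal N}(i)} l_{(i,j)}(w_j-w_i)(\theta h)^s_{(i,j)} n_a|_{(i,j)}.
\end{equation*}

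For part (a) this is trivial: $w_j-w_i=0$ for every $j\in{\cal N}(i)$, so the sum is identically zero, and the mass equation is also satisfied since the fluxes are all null. Hence the state is stationary.

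For part (b) the argument is the one that motivates the choice (\ref{fvm_2D_eq.05}), and it is the step I expect to be the only substantive one. I would classify every interface $(i,j)$ by the membership of its endpoints in ${\cal I}$ and $\complement{\cal I}$:
\begin{itemize}
\item If $i,j\in{\cal I}$ then $w_i=w_j=w$, so $w_j-w_i=0$.
\item If $i\in{\cal I}$ and $j\in\complement{\cal I}$ then $w_i=w<z_j=w_j$, so by (\ref{fvm_2D_eq.05}) with $(v_n)_{(i,j)}=0$ and $w_i<w_j$ we have $(\theta h)^s_{(i,j)}=\theta_j h_j=0$, since $h_j=0$.
\item Symmetrically, if $i\in\complement{\cal I}$ and $j\in{\cal I}$ then $w_i=z_i>w=w_j$, and (\ref{fvm_2D_eq.05}) selects $(\theta h)^s_{(i,j)}=\theta_i h_i=0$.
\item If $i,j\in\complement{\cal I}$ then $h_i=h_j=0$, so regardless of which branch of (\ref{fvm_2D_eq.05}) is chosen, $(\theta h)^s_{(i,j)}=0$.
\end{itemize}
In every case the contribution to the pressure sum is zero, so the momentum equation reduces to $0=0$. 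The mass equation is satisfied since all interface fluxes vanish, and $\theta_i h_i v_{a\,i}\equiv 0$ is preserved. Therefore the state described in (b) is stationary, and the design of the ``upwind'' rule (\ref{fvm_2D_eq.05}) at stagnation points is precisely what makes wet--dry lake shores compatible with the scheme.
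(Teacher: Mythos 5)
Your proof is correct and follows essentially the same route as the paper's: both reduce the problem (since $\boldsymbol{v}_i=0$ kills all flux and friction terms) to showing that the pressure sum $\sum_{j}l_{(i,j)}(w_j-w_i)(\theta h)^s_{(i,j)}n_a|_{(i,j)}$ vanishes, and both handle the singular case by the same observation that the stagnation rule (\ref{fvm_2D_eq.05}) selects the dry cell's value $\theta h=0$ on every wet--dry interface. Your interface-by-interface case split is just a reorganization of the paper's cell-by-cell split, and if anything is slightly more explicit about the remaining terms.
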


\begin{proof}
  For the sake of simplicity, in the case of the singular
  stationary point, we consider that
  $\displaystyle\Omega_{\cal I}=\cup_{i\in{\cal I}}\omega_i$
  is a connected domain.  Since the velocity field is zero,
  it only remains to verify that
  \begin{equation*}
    \sum\limits_{j\in{\cal N}(i)}l_{(i,j)}(w_j-w_i)(\theta
    h)^s_{(i,j)} n_a|_{(i,j)}=0,
  \end{equation*}
  for any cell $\omega_i$.  If $i \in \complement{\cal I}$,
  then the above sum equals zero since $h^s_{(i,j)}=0$, for
  all $j\in{\cal N}(i)$.  If $i \in {\cal I}$, then the sum
  is again zero because either $h^s_{(i,j)}=0$, for
  $j\in\complement{\cal I}$ or $w_j=w_i$, for
  $j\in{\cal I}$.
\end{proof}

\section{Fractional Step-time Schemes}
\label{sect_FractionalSteptimeSchemes}
In what follows we discuss different explicit or
semi-implicit schemes in order to integrate the ODE
(\ref{fvm_2D_eq.07}).

We introduce some notations
\begin{equation}
  \label{fvm_2D_eq_frac.01}
  \begin{array}{ll}
    {\cal J}_{a\,i}(h,\boldsymbol{v}):=&-\displaystyle\sum\limits_{j\in{\cal N}(i)}l_{(i,j)}\theta h_{(i,j)} (v_a)_{(i,j)} (v_n)_{(i,j)},\\
    {\cal S}_{a\,i}(h,w):=&-\displaystyle\frac{1}{2}\sum\limits_{j\in{\cal N}(i)}l_{(i,j)}(w_j-w_i)(\theta h)^{s}_{(i,j)} n_a|_{(i,j)},\\
    {\cal L}_i((h,\boldsymbol{v})):=&-\displaystyle\sum\limits_{j\in{\cal N}(i)}l_{(i,j)}\theta h_{(i,j)} (v_n)_{(i,j)}.
  \end{array}
\end{equation}

Now, (\ref{fvm_2D_eq.07}) becomes
\begin{equation}
  \label{fvm_2D_eq_frac.02}
  \begin{array}{rl}
    \sigma_i\displaystyle\frac{\rm d}{{\rm d}t}\theta_i h_i&={\cal L}_i(h,\boldsymbol{v})+\sigma_i\mathfrak{M}(t,h),\\
    \sigma_i\displaystyle\frac{\rm d}{{\rm d}t}\theta_i h_i v_{a\,i}&={\cal J}_{a\,i}(h,\boldsymbol{v})+{\cal S}_{a\,i}(h,w)-{\cal K}(h)|\boldsymbol{v}_i| v_{a\,i}.
  \end{array}
\end{equation}

\noindent {\bf Source mass.} We assume that the source mass
$\mathfrak{M}$ is of the form
\begin{equation}
  \label{fvm_2D_eq.14}
  \mathfrak{M}(x,t,h)=r(t)-\theta(x)\iota(t,h),
\end{equation}
where $r(t)$ quantifies the rate of the rain and
$\iota(t,h)$ quantifies the infiltration rate.  The
infiltration rate is a continuous function and satisfies the
following condition
\begin{equation}
  \label{fvm_2D_eq.15}
  \iota(t,h)<\iota_{m},\quad {\rm if}\; h\geq 0.
\end{equation}
The basic idea of a fractional time method is to split the
initial ODE into two sub-models, integrate them separately,
and then combine the two solutions \cite{veque-phd, strang}.

We split the ODE (\ref{fvm_2D_eq.07}) into
\begin{equation}
  \label{fvm_2D_eq_frac.03}
  \begin{array}{rl}
    \sigma_i\displaystyle\frac{\rm d}{{\rm d}t}\theta_i h_i&={\cal L}_i(h,\boldsymbol{v}),\\
    \sigma_i\displaystyle\frac{\rm d}{{\rm d}t}\theta_i h_i v_{a\,i}&={\cal J}_{a\,i}(h,\boldsymbol{v}) +{\cal S}_{a\,i}(h,w),
  \end{array}
\end{equation}
and
\begin{equation}
  \label{fvm_2D_eq_frac.04}
  \begin{array}{rl}
    \sigma_i\displaystyle\frac{\rm d}{{\rm d}t}\theta_i h_i&=\sigma_i\mathfrak{M}_i(t,h),\\
    \sigma_i\displaystyle\frac{\rm d}{{\rm d}t}\theta_i h_i v_{a\,i}&=-{\cal K}(h)|\boldsymbol{v}_i| v_{a\,i}.
  \end{array}
\end{equation}
A first order fractional step time accuracy reads as
\begin{equation}
  \label{fvm_2D_eq_frac.05}
  \begin{array}{rl}
    \sigma(\theta h)^{*}&=\sigma(\theta h)^n+\triangle t_n{\cal L}((h,\boldsymbol{v})^{n}),\\
    \sigma(\theta hv_a)^{*}&=\sigma(\theta hv_a)^{n}+\triangle t_n \left({\cal J}_a((h,\boldsymbol{v})^{n})+{\cal S}_a((h,w)^{n})\right),\\
  \end{array}
\end{equation}
\begin{equation}
  \label{fvm_2D_eq_frac.06}
  \begin{array}{rl}
    \sigma(\theta h)^{n+1}&=\sigma(\theta h)^{*}+\sigma\triangle t_n\mathfrak{M}(t^{n+1},h^{n+1}),\\
    \sigma(\theta hv_a)^{n+1}&=\sigma(\theta hv_a)^{*}-\triangle t_n{\cal K}(h)|\boldsymbol{v}^{n+1}| v^{n+1}_{a}.\\
  \end{array}
\end{equation}
The steps (\ref{fvm_2D_eq_frac.05}) and
(\ref{fvm_2D_eq_frac.06}) lead to
\begin{equation}
  \label{fvm_2D_eq_frac.07}
  \begin{array}{rl}
    \sigma(\theta h)^{n+1}=&\sigma(\theta h)^n+\triangle t_n{\cal L}((h,\boldsymbol{v})^{n})+\sigma\triangle t_n\mathfrak{M}(t^{n+1},h^{n+1}),\\
    \sigma(\theta hv_a)^{n+1}=&\sigma(\theta hv_a)^{n}+\triangle t_n\left({\cal J}_a((h,\boldsymbol{v})^{n})+{\cal S}_a((h,w)^{n})\right)-\\
    &-\triangle t_n\sigma{\cal K}(h)|\boldsymbol{v}^{n+1}| v^{n+1}_{a}.\\
  \end{array}
\end{equation} 
To advance a time step, one needs to solve a scalar
nonlinear equation for $h$ and a 2D nonlinear system of
equations for velocity $\boldsymbol{v}$.

In what follows, we investigate some important physical
properties of the numerical solution given by
(\ref{fvm_2D_eq_frac.07}): $h$-positivity, well balanced
property and monotonicity of the energy.

\subsection{h-positivity. Stationary points}
\begin{proposition}[$h$-positivity]
  There exists an upper bound $\tau_n$ for the time step
  $\triangle t_n$ such that if $\triangle t_n<\tau_n$ and
  $h^n>0$, then $h^{n+1}\geq 0$.
\end{proposition}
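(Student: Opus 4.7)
The plan is to split the analysis along the two fractional sub-steps (\ref{fvm_2D_eq_frac.05}) and (\ref{fvm_2D_eq_frac.06}). First I would derive a CFL-type upper bound on $\triangle t_n$ that guarantees $h^{*}\geq 0$ at the end of the hyperbolic half-step; then I would use the specific structure of the mass source (\ref{fvm_2D_eq.14}) together with the bound (\ref{fvm_2D_eq.15}) to show that the scalar implicit equation for $h^{n+1}$ in the source half-step admits a non-negative solution.

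For the first half-step I would repeat the manipulation used in the proof of $h$-positivity for the semidiscrete scheme. With the upwind choice (\ref{fvm_2D_eq.06}) and the decomposition $(v_n)_{(i,j)}=(v_n)_{(i,j)}^{+}-(v_n)_{(i,j)}^{-}$, the mass update rewrites as
\begin{equation*}
\sigma_i\theta_ih_i^{*}=\theta_ih_i^{n}\left(\sigma_i-\triangle t_n\sum_{j\in{\cal N}(i)}l_{(i,j)}(v_n)_{(i,j)}^{n,+}\right)+\triangle t_n\sum_{j\in{\cal N}(i)}l_{(i,j)}\theta_jh_j^{n}(v_n)_{(i,j)}^{n,-}.
\end{equation*}
Since $h^{n}>0$ the velocities are well defined and both sums are finite; the second sum is manifestly non-negative, so non-negativity of $h_i^{*}$ is forced by the bound
\begin{equation*}
\triangle t_n\leq \tau_n^{\mathrm{CFL}}:=\min_{i}\frac{\sigma_i}{\sum_{j\in{\cal N}(i)}l_{(i,j)}(v_n)_{(i,j)}^{n,+}}.
\end{equation*}
For the source half-step, dividing by $\sigma_i\theta_i$, the update reads $\phi_i(h_i^{n+1})=h_i^{*}+\triangle t_n r(t^{n+1})/\theta_i$, where $\phi_i(h):=h+\triangle t_n\iota(t^{n+1},h)$. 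Continuity of $\iota$ and the bound (\ref{fvm_2D_eq.15}) make $\phi_i$ continuous and coercive, hence a solution exists by the intermediate value theorem. Since $\phi_i(0)=\triangle t_n\iota(t^{n+1},0)$, assuming the physical condition $\iota(t,0)=0$ gives $h_i^{n+1}\geq 0$ at once; otherwise the additional restriction $\triangle t_n\iota_m\leq \min_i h_i^{*}$ produces the same conclusion. Setting $\tau_n$ as the minimum of $\tau_n^{\mathrm{CFL}}$ and this second bound closes the argument.

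The main obstacle, in my view, is the implicit character of the infiltration step: the boundedness assumption (\ref{fvm_2D_eq.15}) alone does not forbid the infiltration term from draining a water column that is already exhausted, so one must supplement it either with the natural modelling hypothesis $\iota(t,0)=0$ or with a further timestep restriction as above. A secondary subtlety is that the argument only produces $h^{n+1}\geq 0$, not strictly positive, so the hypothesis $h^{n}>0$ cannot be iterated across several steps without an extra strict-monotonicity argument; this is consistent with the wording of the proposition, which asks only for a single-step preservation result.
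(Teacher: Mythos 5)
Your proof is correct and follows essentially the same route as the paper: rewrite the mass update with the upwind splitting $(v_n)_{(i,j)}=(v_n)^{+}_{(i,j)}-(v_n)^{-}_{(i,j)}$ so that $h_i^{n+1}$ (implicitly, through $\iota$) equals a non-negative combination of the $h^n_j$ plus the rain term, and impose a CFL-type bound so the coefficient of $h_i^n$ stays non-negative; your $\tau_n^{\mathrm{CFL}}$ is just the sharp version of the paper's more conservative $\tau_n=\frac{1}{v^n_{\rm max}}\min_i\bigl\{\sigma_i/\sum_{j}l_{(i,j)}\bigr\}$. You are in fact more careful than the paper on the implicit infiltration equation --- the paper silently assumes the non-negative right-hand side forces $h^{n+1}_i\geq 0$, which, as you correctly note, requires either $\iota(t,0)=0$ or an additional restriction.
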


\noindent
\begin{proof} For any cell $i$ one has
\begin{equation*}
  \begin{array}{ll}
    \sigma_i\theta_ih^{n+1}_i+\triangle t_n\iota(t^{n+1},h^{n+1}_i)=&\sigma_i\theta_ih^{n}_i\left(1-\displaystyle\frac{\triangle t_n}{\sigma_i}\sum\limits_{j\in{\cal N}(i)}l_{(i,j)} (v_n)^{n,+}_{(i,j)}
      \right)+\\
    &\displaystyle +\triangle t_n \sum\limits_{j\in{\cal N}(i)}l_{(i,j)}(\theta h^n)_{j} (v_n)^{n,-}_{(i,j)}+\triangle t_nr(t^{n+1}).
  \end{array}
\end{equation*}

A choice for the upper bound $\tau_n$ is given by
\begin{equation}
  \label{fvm_2D_eq_frac.07-1}
  \tau_n=\displaystyle\frac{1}{v^n_{\rm max}}\min_i\left\{\displaystyle\frac{\sigma_i}{\sum\limits_{j\in{\cal N}(i)}l_{(i,j)}}\right\}.
\end{equation}
\end{proof}

\begin{proposition}[Well balanced]
  The lake and the uniform flow are stationary points of the
  scheme {\rm (\ref{fvm_2D_eq_frac.07})}.
\end{proposition}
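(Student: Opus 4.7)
The plan is to verify directly that the stationary states identified in Propositions \ref{river} and \ref{lake} are fixed points of the one-step update (\ref{fvm_2D_eq_frac.07}), i.e.\ that substituting $(h^n,\boldsymbol{v}^n)$ equal to the stationary state and $(h^{n+1},\boldsymbol{v}^{n+1})=(h^n,\boldsymbol{v}^n)$ satisfies both the explicit advection/pressure update and the implicit friction/source update. Since $\mathfrak{M}=0$ by hypothesis, the $h$-equation in (\ref{fvm_2D_eq_frac.07}) reduces to demanding $\mathcal{L}_i((h,\boldsymbol{v})^n)=0$, and the momentum equation reduces to demanding
\begin{equation*}
\mathcal{J}_{a\,i}((h,\boldsymbol{v})^n)+\mathcal{S}_{a\,i}((h,w)^n) = \sigma_i\mathcal{K}(h^{n+1})|\boldsymbol{v}^{n+1}|v^{n+1}_{a}.
\end{equation*}

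First I would handle the lake (both the regular and singular versions of Proposition \ref{lake}). With $\boldsymbol{v}^n=0$, every interface normal velocity $(v_n)_{(i,j)}$ vanishes, so $\mathcal{L}_i=0$ and $\mathcal{J}_{a\,i}=0$ trivially. The well-balanced property built into (\ref{fvm_2D_eq.05}) and the proof of Proposition \ref{lake} shows $\mathcal{S}_{a\,i}=0$ at the lake state. The implicit equation for $\boldsymbol{v}^{n+1}$ then becomes $\theta_i h_i v_{a\,i}^{n+1} = -\Delta t_n\mathcal{K}(h^{n+1})|\boldsymbol{v}^{n+1}|v_{a\,i}^{n+1}$, which (using $h^{n+1}=h^n$) admits $\boldsymbol{v}^{n+1}=0$ as a solution; this solution is unique because the map $\boldsymbol{v}\mapsto \theta h\boldsymbol{v}+\Delta t\,\mathcal{K}|\boldsymbol{v}|\boldsymbol{v}$ is strictly monotone.

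Next I would handle the uniform flow of Proposition \ref{river}. Constancy of $h$ and $\boldsymbol{v}$ makes the interface values $\theta h_{(i,j)}$ and $(v_a)_{(i,j)}$ constant across all neighbors, so
\begin{equation*}
\mathcal{L}_i = -\theta h\sum_{j\in\mathcal{N}(i)}l_{(i,j)}(v_n)_{(i,j)} = -\theta h\,\boldsymbol{v}\cdot\!\!\sum_{j\in\mathcal{N}(i)}l_{(i,j)}\boldsymbol{n}_{(i,j)} = 0
\end{equation*}
by the closed-polygon identity $\sum l_{(i,j)}\boldsymbol{n}_{(i,j)}=0$, and $\mathcal{J}_{a\,i}=0$ by the same argument. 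The computation in the proof of Proposition \ref{river} shows that $\mathcal{S}_{a\,i}=\sigma_i\mathcal{K}|\boldsymbol{v}|v_a$ at the uniform state. Hence taking $\boldsymbol{v}^{n+1}=\boldsymbol{v}^n$ satisfies the implicit friction equation identically, and uniqueness again follows from strict monotonicity of the friction map.

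The only subtlety I anticipate is the uniqueness of the solution of the implicit step (\ref{fvm_2D_eq_frac.06}), which must be invoked in order to conclude that $(h^n,\boldsymbol{v}^n)$ is not merely \emph{a} fixed point but the one produced by the scheme. This is the main obstacle, but it is mild: for each cell the map $\boldsymbol{v}\mapsto \theta_ih_i\boldsymbol{v}+\Delta t_n\mathcal{K}(h_i)|\boldsymbol{v}|\boldsymbol{v}$ is a strictly monotone radial function, hence invertible, so the momentum update has a unique solution; likewise the scalar equation for $h_i^{n+1}$ involving $\iota$ is uniquely solvable thanks to the boundedness and continuity of $\iota$ asserted in (\ref{fvm_2D_eq.15}).
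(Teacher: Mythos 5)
Your proposal is correct and follows the same route the paper intends: the paper's own proof is a one-line deferral (``similarly as in propositions \ref{river} and \ref{lake}''), and you carry out exactly that verification, checking that $\mathcal{L}_i$, $\mathcal{J}_{a\,i}$ vanish and that $\mathcal{S}_{a\,i}$ either vanishes (lake) or balances the friction term (uniform flow). The one place where you go genuinely beyond the paper is the implicit sub-step (\ref{fvm_2D_eq_frac.06}): since $\boldsymbol{v}^{n+1}$ is defined only implicitly, showing that the stationary state \emph{satisfies} the update is not quite enough --- one must also know the scheme cannot output a different root, and your observation that $\boldsymbol{v}\mapsto\theta h\boldsymbol{v}+\triangle t\,\mathcal{K}|\boldsymbol{v}|\boldsymbol{v}$ is strictly monotone (hence the momentum solve has a unique solution, with the degenerate case $\theta h=0$ on dry cells still forced to $\boldsymbol{v}=0$ by the quadratic term since $\mathcal{K}(0,\theta)=\theta\alpha_s>0$) closes this gap cleanly. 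Two minor caveats: for the uniform flow you should state explicitly that $\mathfrak{M}=0$ (or that rain and infiltration balance), since Proposition \ref{river} is silent on the mass source; and the closed-polygon identity $\sum_j l_{(i,j)}\boldsymbol{n}_{(i,j)}=0$ you invoke for $\mathcal{L}_i$ and $\mathcal{J}_{a\,i}$ presupposes interior cells, which is consistent with the ``infinitely extended plane'' setting of the uniform flow but worth saying.
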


\noindent
\begin{proof}
  One can prove this result similarly as in propositions
  \ref{river} and \ref{lake}.
\end{proof}

Unfortunately, the semi-implicit scheme
(\ref{fvm_2D_eq_frac.07}) does not preserve the monotonicity
of the energy.

\subsection{Discrete energy}   
The variation of the energy between two consecutive time
steps can be written as
\begin{equation}
  \label{fvm_2D_eq_frac.08}
  \begin{array}{rl}
    {\cal E}^{n+1}-{\cal E}^n&=\displaystyle\sum\limits_i\theta_i\sigma_i(h_i^{n+1}-h^n)(w_i^n-\displaystyle\frac{|\boldsymbol{v}^n_i|^2}{2})+\\
    &+\displaystyle\sum\limits_i\theta_i\sigma_i\left<(h\boldsymbol{v})^{n+1}_i-(h\boldsymbol{v})^{n}_i,\boldsymbol{v}^n_i\right>+\\
    &+g\displaystyle\sum\limits_i\theta_i\sigma_i\displaystyle\frac{(h_i^{n+1}-h^n)^2}{2}+\sum\limits_i\theta_i\sigma_i \displaystyle\frac{h_i^{n+1}}{2}\left|\boldsymbol{v}^{n+1}_i-\boldsymbol{v}^{n}_i\right|^2.
  \end{array}
\end{equation}
If the sequence $(h,\boldsymbol{v})^n$ is given by the
scheme (\ref{fvm_2D_eq_frac.07}), we obtain
\begin{equation}
  \label{fvm_2D_eq_frac.09}
  \begin{array}{rl}
    {\cal E}^{n+1}-{\cal E}^n&=-\displaystyle\triangle t_n\sum\limits_i\sigma_i{\cal K}(h^{n+1})|\boldsymbol{v}^{n+1}_i|^2\left<\boldsymbol{v}_i^{n+1},\boldsymbol{v}^{n}_i\right>+\\
    &+\displaystyle g\sum\limits_i\theta_i\sigma_i\displaystyle\frac{(h_i^{n+1}-h^n)^2}{2}+\sum\limits_i\theta_i\sigma_i\displaystyle\frac{h_i^{n+1}}{2}\left|\boldsymbol{v}^{n+1}_i-\boldsymbol{v}^{n}_i\right|^2+\\
    &+TS+TB,
  \end{array}
\end{equation}                          
where $TB$ and $TS$ stand for the contribution of boundary
and mass source to the energy production.

Note that, in the absence of $TB$ and $TS$ we cannot
conclude from (\ref{fvm_2D_eq_frac.09}) that the energy is
decreasing in time.  Our numerical computations emphasize
that the scheme introduces spurious oscillations in the
neighborhood of the lake points.  In order to decrease a
possible increase of energy added by the semi-implicit
scheme (\ref{fvm_2D_eq_frac.07}) and to eliminate these
oscillations, we introduce an artificial viscosity in the
scheme \cite{veque, kurganov}.  Adding a ``viscous''
contribution to the term ${\cal J}$,
\begin{equation}
  \label{fvm_2D_eq_frac.10}
  {\cal J}^{\boldsymbol{v}}_{a\,i}={\cal J}_{a\,i}(h,\boldsymbol{v})+
  \displaystyle\sum\limits_{j\in{\cal N}(i)}l_{(i,j)}\mu_{(i,j)} ((v_a)_j- (v_a)_i),
\end{equation}
the variation of energy is now given by
\begin{equation}
  \label{fvm_2D_eq_frac.11}
  {\cal E}^{n+1}_{\boldsymbol{v}}-{\cal E}_{\boldsymbol{v}}^n={\cal E}^{n+1}-{\cal E}^n-\triangle t_n\displaystyle\sum\limits_{s(i,j)}l_{(i,j)}\mu_{(i,j)}\left|\boldsymbol{v}_i-\boldsymbol{v}_j\right|^2,
\end{equation}
where $\mu_{(i,j)}>0$ is the artificial viscosity.

\subsection{Stability}
The stability of any numerical scheme ensures that errors in
data at a time step are not further amplified along the next
steps.  To acquire the stability of our scheme, we have
investigated several time-bounds $\tau_n$ and different
formulas for the viscosity $\nu$.  The best results were
obtained with
\begin{equation}
  \label{fvm_2D_eq_frac.12}
  \tau_n=\displaystyle\frac{\phi_{\rm min}}{c^n_{\rm max}}, \quad
  \mu_{(i,j)}=(\theta h)_{(i,j)}c_{(i,j)},
\end{equation}
where
\begin{equation}
  \label{fvm_2D_eq_frac.13}
  \begin{array}{l}
    c_i=|\boldsymbol{v}|_i+\sqrt{gh_i},\\
    c_{\rm max}=\max\limits_i \{c_i\},\\
    c_{(i,j)}=\max\{c_i,c_j\},\\
    \phi_{\rm min}=\min\limits_i
    \left\{
    \displaystyle\frac{\sigma_i}{\sum\limits_{j\in{\cal N}(i)}l_{(i,j)}}
    \right\}.
  \end{array}
\end{equation}

\begin{remark}
  An upper bound, as {\rm (\ref{fvm_2D_eq_frac.12})}, for
  the time-step is well known in the theory of hyperbolic
  system, CFL condition {\rm \cite{bouchut-book, veque}}.
\end{remark}

\section{Validation}
A rough classification of validation methods splits them
into two classes: internal and external.  For the internal
validation, one analyses the numerical results into a
theoretical frame: comparison to analytical results,
sensibility to the variation of the parameters, robustness,
stability with respect to the errors in the input data etc.
These methods validate the numerical results with respect to
the mathematical model and not with the physical processes;
this type of validation is absolutely necessary to ensure
the mathematical consistency of the method.

The external validation methods assume a comparison of the
numerical data with measured real data.  The main advantage
of these methods is that a good consistency of data
validates both the numerical data and the mathematical
model.  In the absence of measured data, one can do a
qualitative analysis: the evolution given by the numerical
model is similar to the observed one, without pretending
quantitative estimations.

\subsection{Internal validation}
We compare numerical results given by a 1-D version of our
model with the analytical solution for a Riemann Problem
\footnote{Ion S, Marinescu D, Cruceanu SG. 2015. Riemann
  Problem for Shallow Water Equations with Porosity. {\\ \tt
    http://www.ima.ro/PNII\_programme/ASPABIR/pub/slides-CaiusIacob2015.pdf}}.
Figure \ref{fig_1Dcomparison} shows a very good agreement
when the porosity is constant and a good one when the
porosity (cover plant density) varies.
\begin{figure}[htbp]
  \centering
  \includegraphics[width=0.6\textwidth]{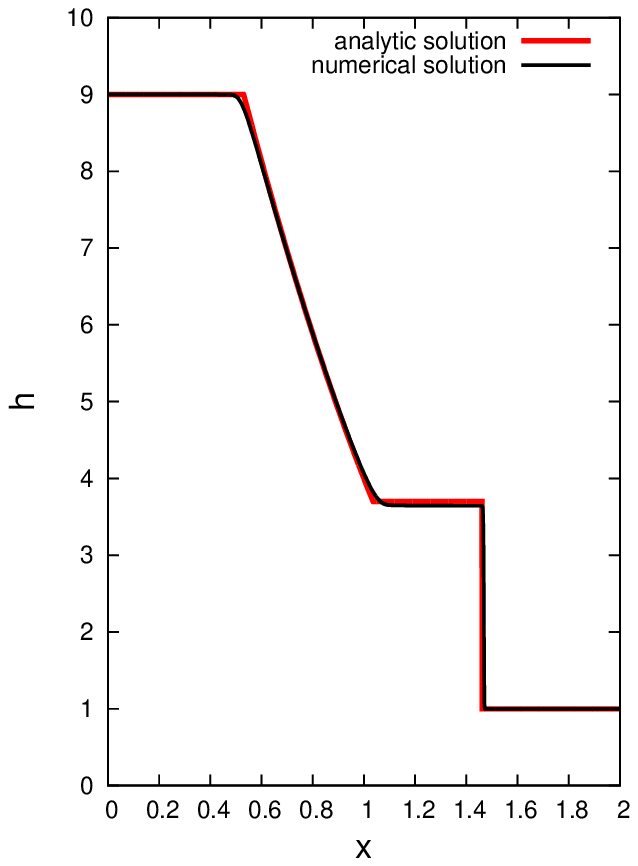}
  \hspace{-3cm}
  \includegraphics[width=0.6\textwidth]{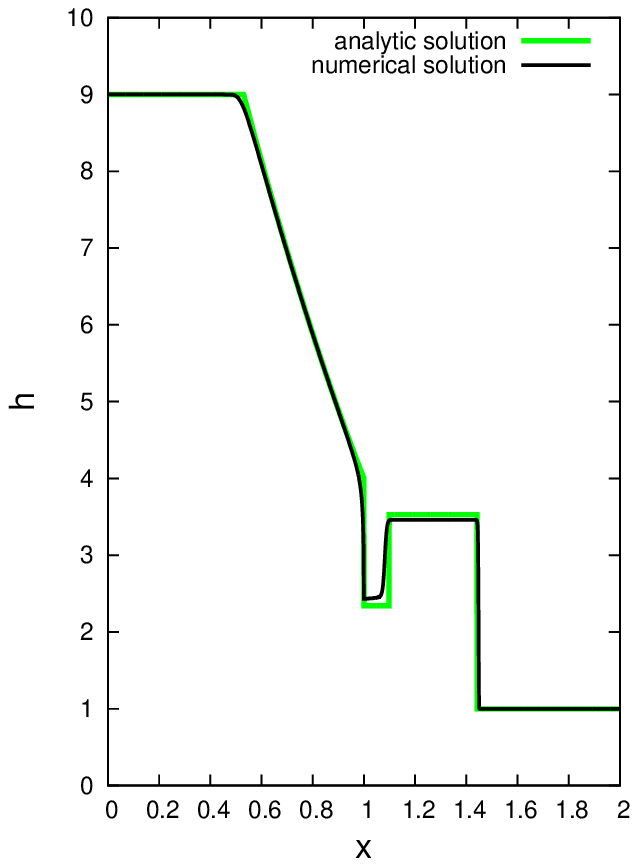}
  \caption{Comparison of the numerical and analytical
    solutions for the Riemann Problem.  The surface is
    described by $z=1$; at the initial moment we have the
    velocity field $\boldsymbol{v}=\boldsymbol{0}$ and a
    discontinuity in the water-depth $h$:
    $\left\{ h=9,\; {\rm for }\; x<1 \right\}$,
    $\left\{ h=1,\; {\rm for }\; x>1 \right\}$. Left picture
    - constant porosity: $\theta=1$. Right picture -
    variable porosity:
    $\left\{ \theta=0.8,\; {\rm for }\; x<1 \right\}$,
    $\left\{ \theta=1,\; {\rm for }\; x>1 \right\}$.}
  \label{fig_1Dcomparison}
\end{figure}
Also, in Figure \ref{fig_2Dcomparison} we analyze the
response of our model to the variation of the parameters.

\subsection{External validation}
Unfortunately, we do not have data for the water
distribution, plant cover density and measured velocity
field in a hydrographic basin to compare our numerical
results with.  However, to be closer to reality, we have
used GIS data for the soil surface of Paul's Valley and
accomplished a theoretical experiment: starting with a
uniform water depth on the entire basin and using different
cover plant densities, we run our model, ASTERIX based on a
hexagonal cellular automaton \cite{sds-ADataPortingTool}.
Figure \ref{fig_2Dcomparison} shows that the numerical
results are consistent with direct observations concerning
the water time residence in the hydrographic basin.
\begin{landscape}
  \begin{figure}[ht]
    \centering
    \includegraphics[width=0.6\textwidth]{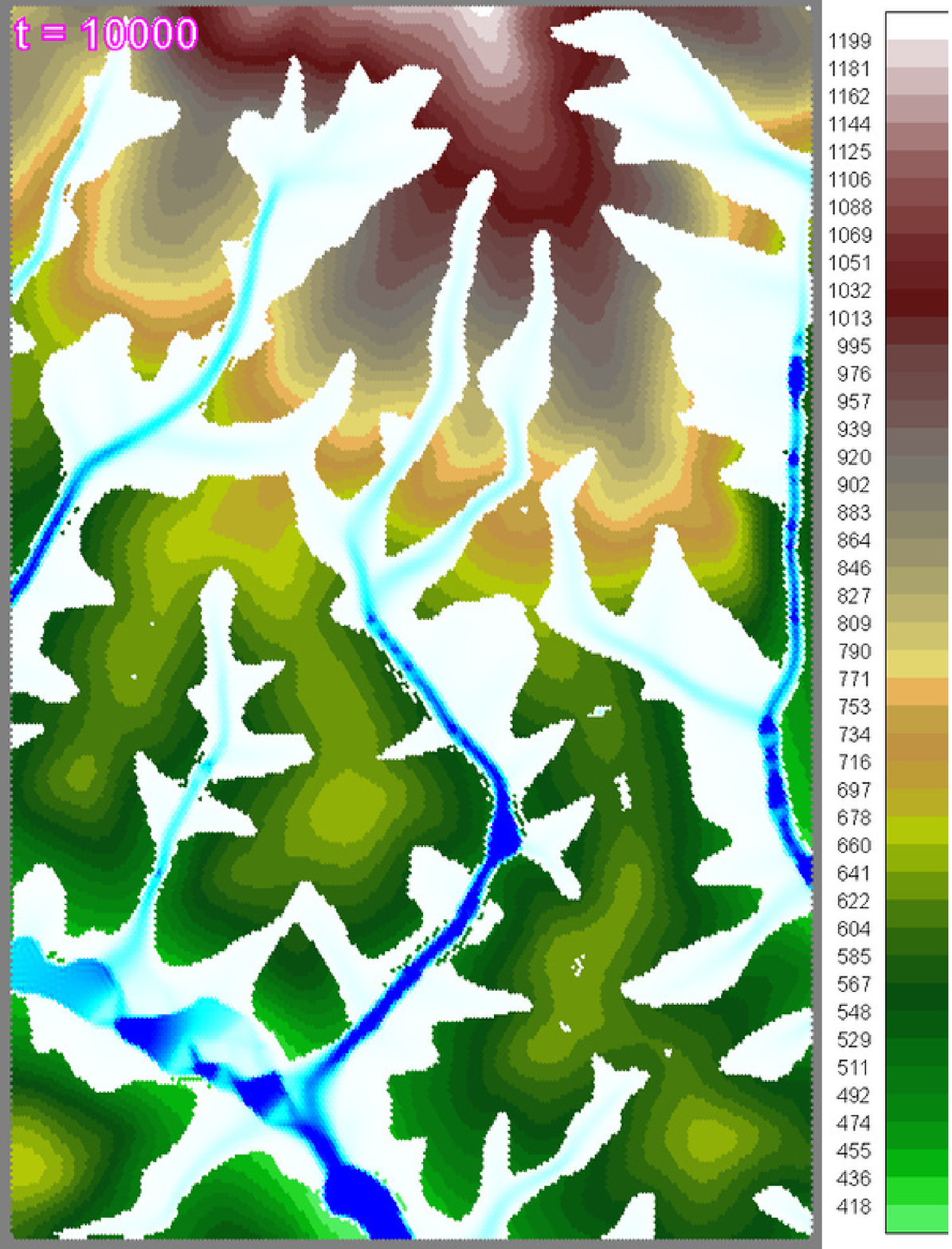}
    \hspace{1cm}
    \includegraphics[width=0.6\textwidth]{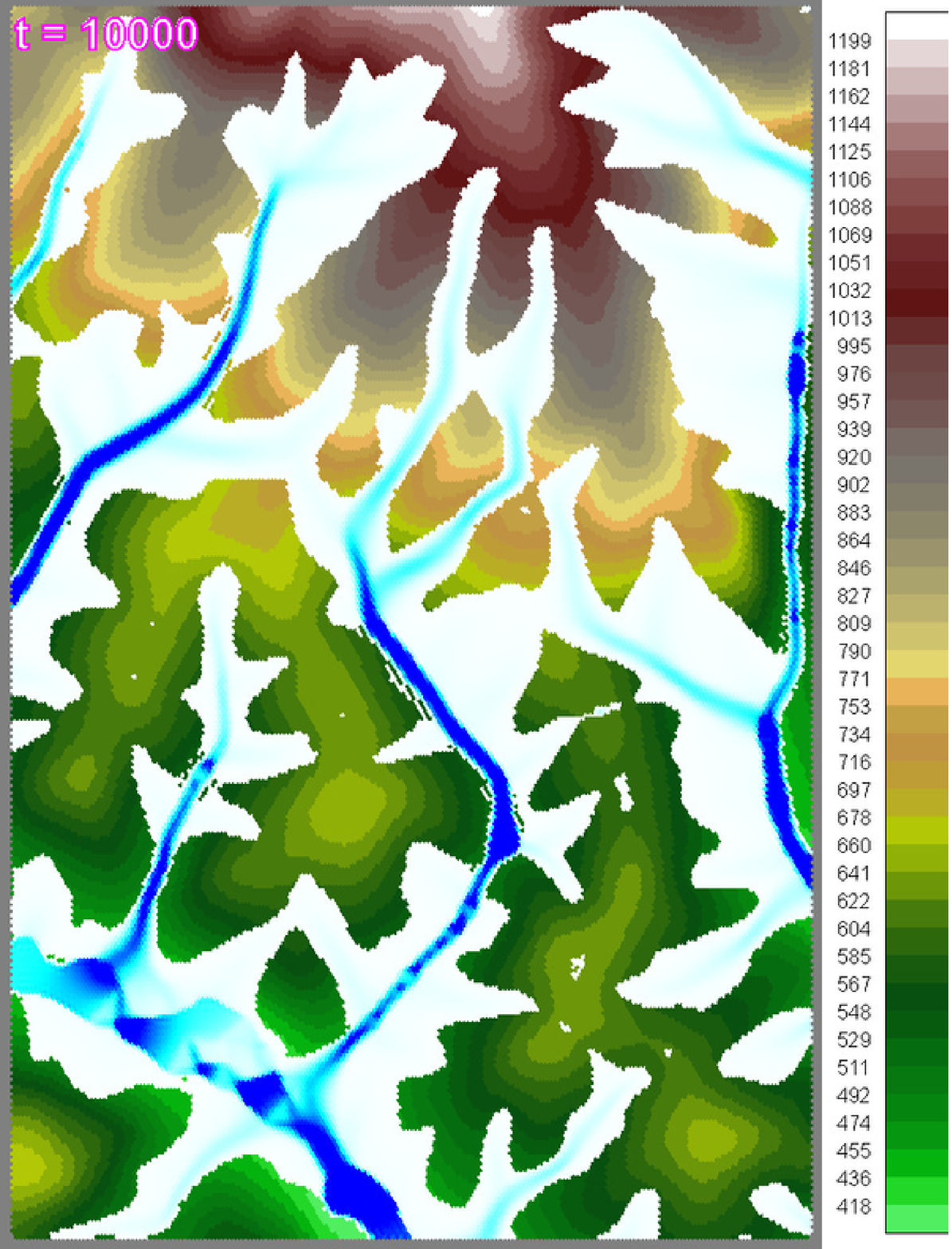}
    \caption{Snapshot of water distribution in Paul's Valley
      hydrographic basin.  Direct observations indicate that
      the water time residence depends on the density of the
      cover plant.  Our numerical data are consistent with
      terrain observations: the water drainage time is
      bigger for the case of higher cover plant density.
      $\theta=3\%$ and $\theta=35\%$ for the left and right
      picture, respectively.}
    \label{fig_2Dcomparison}
  \end{figure}
\end{landscape}

Figure \ref{fig_asterix_vs_caesar} shows the results for the
water content in Paul's Valley basin obtained with our
models ASTERIX and CAESAR-Lisflood-OSE
\cite{sds-ADataPortingTool, sds-ose}.  This variable $q$ is
in fact the relative amount of water in the basin at the
moment of time $t$:
\begin{equation*}
  q(t) = \displaystyle\frac{\displaystyle\int_{\Omega}h(t,x){\rm d}x}{\displaystyle\int_{\Omega}h(0,x){\rm d}x}.
\end{equation*}
\begin{figure}[htbp]
  \centering
  \includegraphics[width=0.49\textwidth]{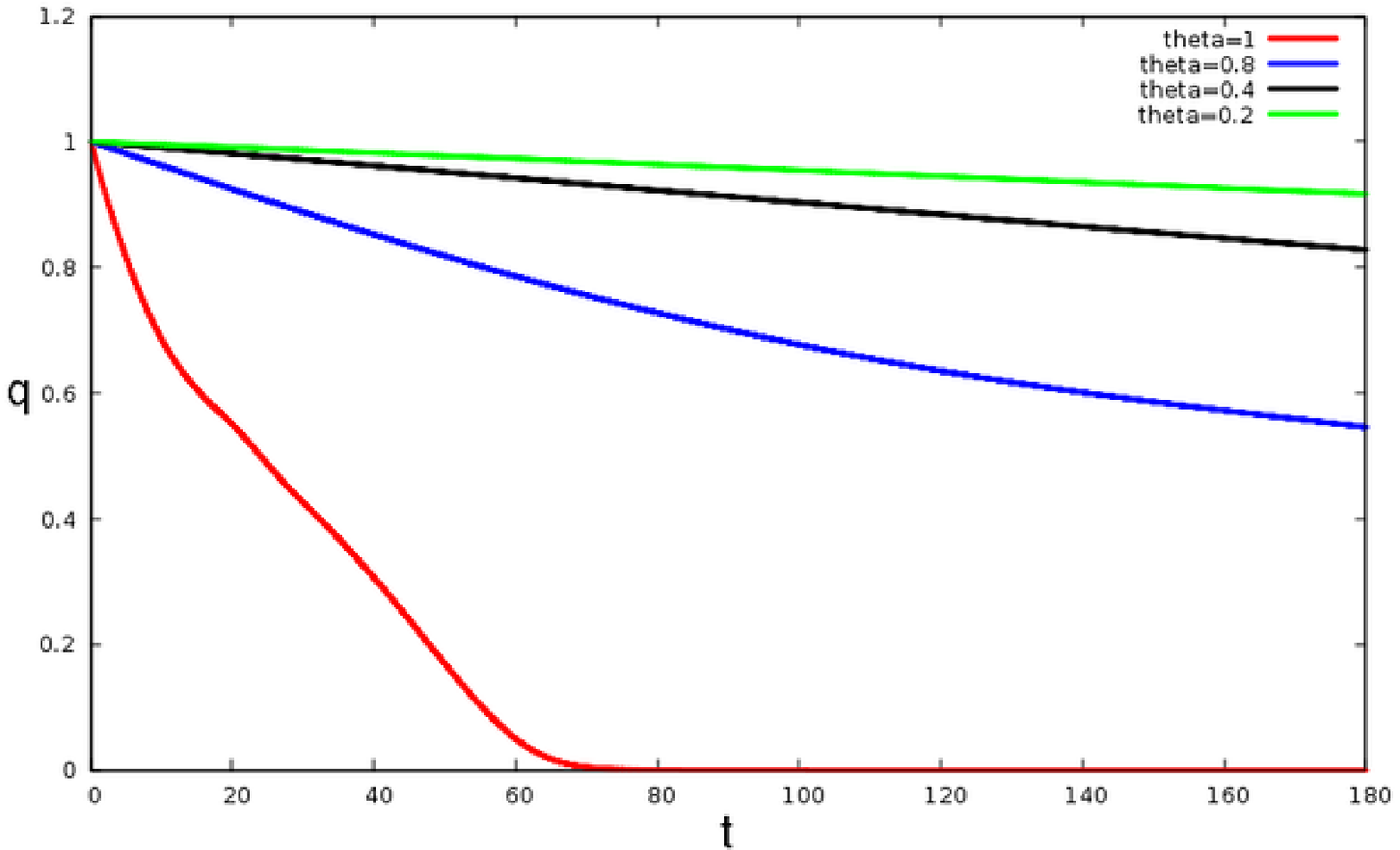}
  \includegraphics[width=0.49\textwidth]{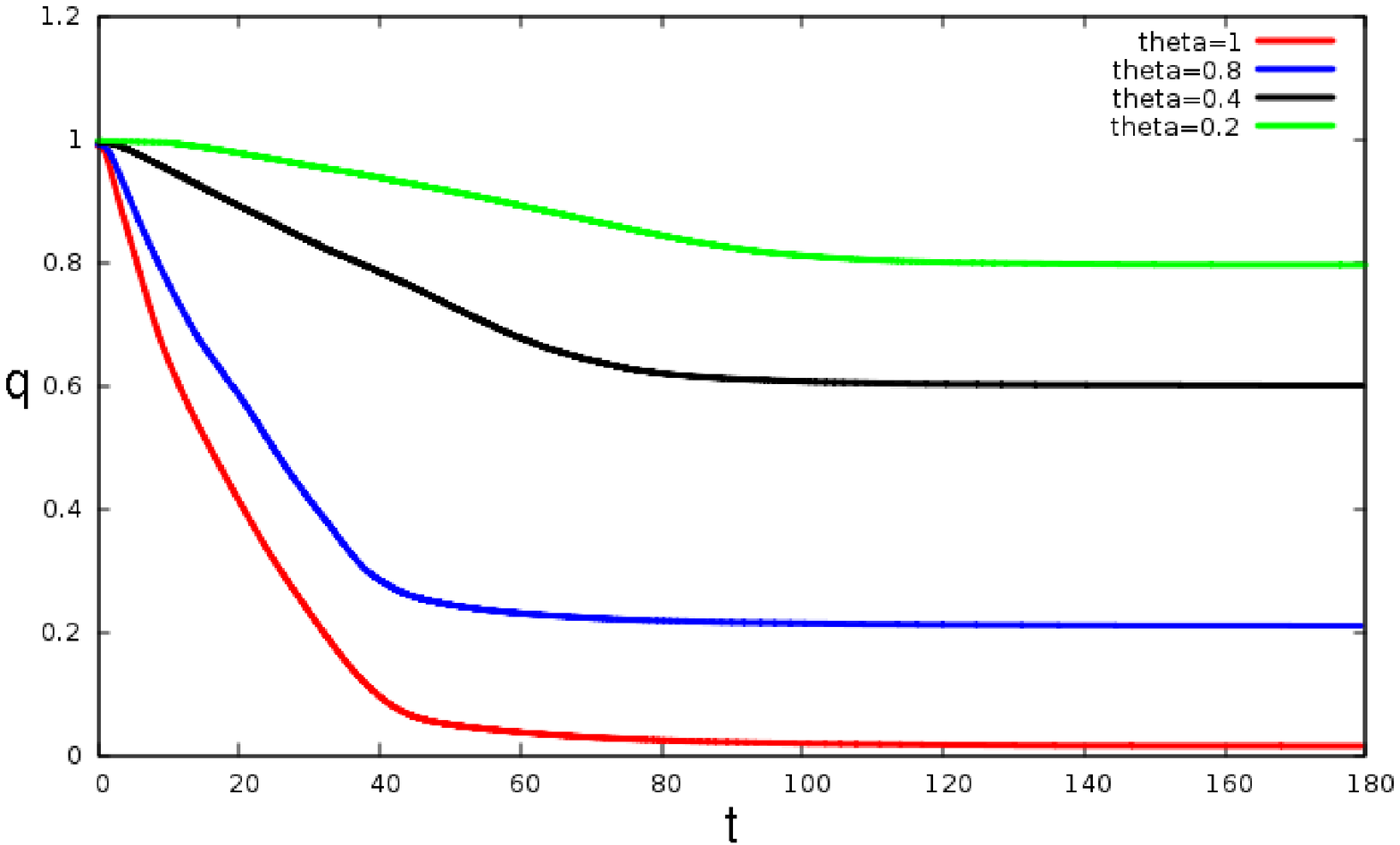}
  \caption{Time evolution of the water content in Paul's
    Valley hydrographic basin with ASTERIX (left picture)
    and CAESAR (right picture).}
  \label{fig_asterix_vs_caesar}
\end{figure}
This variable is also a measure of the amount of water
leaving the basin.  A general issue relates to whether
higher cover plant densities can prevent soil erosion and
flood or not.  Both pictures show that if the cover plant
density is increasing then the decreasing rate $\dot{q}$ of
$q$ is smaller.  We can think at a ``characteristic
velocity'' of the water movement in the basin and this
velocity is in a direct relation with $\dot{q}$.  We can now
speculate that smaller values of $\dot{q}$ imply softer
erosion processes.

This valley belongs to Ampoi's catchment basin.  Flood
generally appears when the discharge capacity of a river is
overdue by the water coming from the river catchment area.
Our pictures show that higher cover plant densities imply
smaller values of $\dot{q}$ which in turn give Ampoi River
the time to evacuate the water amount flowing from the
valley.

\section*{Acknowledgement}
Partially supported by the Grant 50/2012 ASPABIR funded by
Executive Agency for Higher Education, Research, Development
and Innovation Funding, Romania (UEFISCDI).

\bibliographystyle{plain}
\bibliography{biblio_arxiv_swmhpvh}

\end{document}